\newcommand\F{\mathcal F}
\newcommand\I{\mathbbm{1}}
\renewcommand{\P}{\mathbb{P}}
\newcommand{\Q}{\mathbb{Q}}
\renewcommand{\rho}{\varrho}
\newcommand\eps{\varepsilon}
\newcommand\N{\mathbb{N}}
\newcommand\E{\mathbb{E}}
\newcommand\RR{\mathbb{R}}
\newtheorem{theorem}{Theorem}[section]  
\newtheorem*{theorem*}{Theorem BD}%[section]  
\newtheorem{corollary}[theorem]{Corollary}
\newtheorem{lemma}[theorem]{Lemma}
\theoremstyle{definition}
\theoremstyle{example}
\DeclareMathOperator{\conv}{conv}
\DeclareMathOperator{\Var}{MV}
\DeclareMathOperator{\MV}{MV}
\DeclareMathOperator{\sign}{sign}
\title[Riemann-integration and the Bichteler--Dellacherie  Theorem]{Riemann-integration and a new proof of the Bichteler--Dellacherie  Theorem}
\author{M. Beiglb\"ock$^{\ddagger}$}
\author{P. Siorpaes$^{\dagger\ddagger}$}
\thanks{${}^{\dagger}$  University of Vienna, Faculty of Mathematics,  \emph{email:} mathias.beiglboeck@univie.ac.at, \textit{Corresponding author}\\ 
${}^{\ddagger}$ University of Vienna, Faculty of Mathematics,  \emph{email:} pietro.siorpaes@univie.ac.at}
\thanks{The first  author thanks the Austrian science fund FWF for support through project p21209. Both authors thank Josef Teichmann and Pianta Giacomo for useful comments.}
\begin{document}
\maketitle
\begin{abstract}
We give a new  proof of the celebrated Bichteler--Dellacherie Theorem, which states that a process $S$ is a good integrator if and only if it is the sum of a local martingale and a finite-variation process. As a corollary, we obtain a characterization of semimartingales along the lines of classical Riemann integrability.

\bigskip

\noindent\emph{Keywords:} Bichteler--Dellacherie Theorem, semimartingale decomposition, good integrators.\\
\emph{Mathematics Subject Classification (2010):} 60G05 
\end{abstract}

\section{Introduction}
The Bichteler--Dellacherie  theorem basically asserts that one can integrate with respect to a process $S$ iff $S$ is a semimartingale, i.e., the sum of a local martingale and a finite-variation process; in this paper we provide a new proof of this celebrated result, together with a new characterization of semimartingales.

The Doob-Meyer decomposition theorem leads to the following reformulation of the Bichteler--Dellacherie theorem: a bounded process allows for a good integration theory iff it is (locally) the difference of two submartingales.
This is analogous to  the deterministic case, where one can integrate
with respect to a function $f$ iff $f$ can be written as a difference
of two increasing functions.  We find that this analogy is sound, as the simple proof in the deterministic set-up can be reinterpreted to establish the Bichteler--Dellacherie  theorem in full generality.

As a corollary, we obtain that  semimartingales can be characterized by Riemann-sums in the following way:
a c\`adl\`ag adapted process $(S_t)_{t\in [0,1]}$ is  a semimartingale iff 
 for every bounded adapted continuous process $H$ the sequence of Riemann-sums
\begin{align}
\label{riemsum}
\textstyle \sum_{i=0}^{2^n-1} H_{\frac i{2^n}} (S_{\frac{i+1}{2^n}}-S_{\frac i{2^n}}) 
 \end{align}
converges in probability. This observation emphasizes the viewpoint that semimartingales are the stochastic equivalent of processes of finite variation.

We notice that this is in remarkable contrast to a fact which Meyer \cite{Me81}   attributes to Jeulin: there are continuous processes which are not semimartingales for which \eqref{riemsum} holds for all  integrands $H$ of the type $H_t=f(t,S_t)$, where $f$ is a bounded continuous function.

\section{Definitions, assumptions and main statement}
Throughout this article we consider a finite time horizon $T$, which wlog we take to be equal to $1$, and a filtered probability space $(\Omega,\mathcal{F},\mathbb{F},P)$. We assume that the filtration $\mathbb{F}=(\mathcal{F}_t)_{t\in [0,1]}$ satisfies  the usual conditions of right continuity and saturatedness.
A \emph{simple integrand} is a stochastic process $H = (H_t)_{t\in (0,1]} $ of the form
\begin{equation}
\label{H}
\textstyle H=\sum_{i=1}^k H^i \I_{(\tau_i,\tau_{i+1}]} , 
\end{equation}
where $k$ is a finite number, $0\leq \tau_1\leq \ldots\leq \tau_{k+1}\leq 1$ are stopping times, and $H^i$ are bounded $\F_{\tau_i}$-measurable random variables.
The vector space of simple integrands will be denoted by $\mathcal{S}$, and will be endowed with the sup norm 
  \begin{align}
\label{supnorm}
\textstyle \|H\|_\infty:= \|\sup_{t\in [0,1]} |H_t| \,
 \|_{L^\infty(\P)}.
\end{align}

Given an adapted (real-valued) process $S=(S_t)_{t\in[0,1]}$ and a simple integrand $H$ as in (\ref{H}), it is natural to define the (It\^o) integral $\mathcal{I}_S(H)$ of $H\in \mathcal{S}$ with respect to $S$ as the random variable
 \begin{equation}
\label{riemannintegral}
\textstyle\mathcal{I}_S(H):= \sum_{i=1}^k H^i ( S_{\tau_{i+1}}-S_{\tau_{i}}).
\end{equation}
 This defines the integral as a linear operator $\mathcal{I}_S$ from the normed space $\mathcal{S}$ to the topological vector space $L^0(\P)$ (the space of all random variables,
with the metrizable topology of convergence in probability).  A process $S$ is  then called a \emph{good integrator} if $\mathcal{I}_S: \mathcal{S} \to  L^0(\P)$ is continuous, i.e.\ if $H^n\in \mathcal S, \|H^n\|_\infty \to 0$ implies that $\mathcal{I}_S(H^n)$ goes to $0$ in probability as $n\to \infty$. 

 It is easy to show that (locally) square integrable martingales and processes of finite variation are good integrators. It is also true that any (local) martingale is a good integrator, although this requires a little more work; we refer to \cite{Ed90}  for an elementary proof of this result which does not make use of the structure of local martingales in continuous time. 
 
The converse result  is of key importance to stochastic analysis, as it characterizes the processes $S$ for which one can build a powerful integration theory. 
This is the object of the following well known theorem, commonly known as the Bichteler-Dellacherie Theorem. 

\begin{theorem*}
\label{BichtelerDellacherie}
Let $(S_t)_{0\leq t \leq 1}$ be a c\`adl\`ag adapted process. 
If $\mathcal{I}_S: \mathcal{S} \to  L^0(\P)$ is continuous then $S$ can be written as a sum of a  c\`adl\`ag local martingale and a c\`adl\`ag adapted process of finite variation.
\end{theorem*}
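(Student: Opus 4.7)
The plan is to mimic the classical proof that a function $f\colon[0,1]\to\RR$ has bounded variation iff it is the difference of two monotone functions, via the Jordan decomposition $f=V^+-V^-$. Since Doob--Meyer decomposes any submartingale as a local martingale plus a predictable increasing process, it suffices to exhibit $S$, after suitable localization, as the difference of two submartingales. I would first localize with $\tau_N:=\inf\{t:|S_t|>N\}$ to reduce to bounded $S$; continuity of $\mathcal{I}_S$ passes to the stopped process.

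In analogy with the classical formula $V^+(f)(t)=\sup\{\int_0^t H\,df : 0\le H\le 1\}$, the candidate ``positive variation'' process is
\[
A_t \;:=\; \operatorname{ess\,sup}\bigl\{\, \E\bigl[\mathcal{I}_S(H\I_{(0,t]})\,\big|\,\mathcal{F}_t\bigr] : H\in\mathcal{S},\ 0\le H\le 1\bigr\}.
\]
The submartingale property $\E[A_t\mid\mathcal{F}_s]\ge A_s$ for $s\le t$ is immediate from the tower property, since every $H$ supported on $(0,s]$ also competes in the ess sup defining $A_t$. The inequality $\E[A_t-S_t\mid\mathcal{F}_s]\ge A_s-S_s$ follows by writing $S_t-S_s=\mathcal{I}_S(\I_{(s,t]})$ and observing that if $H$ is a competitor for $A_s$ supported on $(0,s]$, then $H+\I_{(s,t]}$ is admissible (disjoint supports keep it in $[0,1]$) and competes for $A_t$.

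The main obstacle is that, under mere $L^0$-boundedness of $K:=\{\mathcal{I}_S(H) : H\in\mathcal{S},\,0\le H\le 1\}$, conditional expectations can blow up and $A_t$ may equal $+\infty$. The remedy is first to pass to an equivalent probability $Q\sim\P$ under which $K$ is $L^1(Q)$-bounded; such a $Q$ exists by a Yan/Kreps--Yan type separation argument, the $L^0$-boundedness of the convex set $K$ precluding any nontrivial element of $L^\infty_+(\P)$ from lying in the $L^0$-closure of $K-L^0_+(\P)$. Under $Q$, the family defining $A_t$ is upward-filtering (given $H^1,H^2$, swap them on the event where one yields the larger conditional $Q$-expectation), so the ess sup is attained as an increasing limit and $A_t\in L^1(Q)$. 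Taking c\`adl\`ag modifications of the $Q$-submartingales $A$ and $A-S$ and applying Doob--Meyer gives $S=A-(A-S)$ as a $Q$-semimartingale; since the semimartingale property and the class of c\`adl\`ag adapted finite-variation processes are invariant under equivalent measure changes, the decomposition transfers back to $\P$, and pasting over $\tau_N\uparrow\infty$ completes the argument.
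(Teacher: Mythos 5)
There is a fatal gap at the heart of your construction: the ``positive variation'' process $A$ is in general identically $+\infty$, and the upward-filtering argument you invoke to prevent this is invalid. First observe that for $H$ supported on $(0,t]$ the random variable $\mathcal{I}_S(H\I_{(0,t]})=(H\cdot S)_t$ is already $\F_t$-measurable, so the conditioning is vacuous and $A_t=\operatorname{ess\,sup}\{(H\cdot S)_t:H\in\mathcal S,\ 0\le H\le 1\}$ is a \emph{pathwise} supremum of realized gains. Such a supremum does not see the adaptedness constraint: among the competitors are all \emph{deterministic} $\{0,1\}$-valued integrands $H^s=\sum_{t_i\in D_n} s_i\I_{(t_i,t_{i+1}]}$, $s\in\{0,1\}^{2^n}$, and for each fixed $\omega$ one of these finitely many choices realizes $\sum_{t_i\in D_n}(S_{t_{i+1}\wedge t}-S_{t_i\wedge t})^+(\omega)$. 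Hence $A_t\geq\sup_n\sum_{t_i\in D_n}(S_{t_{i+1}\wedge t}-S_{t_i\wedge t})^+$, which equals $+\infty$ a.s.\ whenever $S$ has nonvanishing quadratic variation --- e.g.\ for Brownian motion, the prototypical good integrator. The upward-filtering claim fails for exactly the same reason: the event $B$ on which $H^1$ beats $H^2$ lies in $\F_t$, so the swapped integrand $H^1\I_B+H^2\I_{B^c}$ is \emph{not adapted} and is not a legitimate competitor; consequently the bound $\E_Q[A_t]\le\sup_{X\in K}\E_Q[X]$ cannot be extracted. (Sanity check: if $S$ were a nonconstant bounded $Q$-martingale, every competitor would have $Q$-expectation $0$, so directedness would force $A_t=0$ a.s., contradicting $A_t\geq (S_t-S_0)^+$.) The classical formula $V^+(f)(t)=\sup\{\int_0^t h\,df\}$ simply does not survive the passage to essential suprema; the correct stochastic surrogate for the positive variation must be built from \emph{conditional expectations of increments}, i.e.\ the mean variation $\MV(S,\pi)=\E\sum_{t_i\in\pi}|\E[S_{t_{i+1}}-S_{t_i}|\F_{t_i}]|$, whose finiteness (the quasimartingale property) is precisely what the adapted integrands $\sum_{t_i}\sign(\E[S_{t_{i+1}}-S_{t_i}|\F_{t_i}])\I_{(t_i,t_{i+1}]}$ extract from the good-integrator property; Rao's theorem then delivers the two submartingales.

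Two further points. Your localization does not produce a bounded process: $S^{\tau_N}$ retains the possibly unbounded jump at $\tau_N$, so one must first split off the finite-variation process of large jumps $J_t=\sum_{0<s\le t}\Delta S_s\I_{\{|\Delta S_s|\ge 1\}}$, as is done in Section 5. More importantly, even after repairing the construction of $A$ along the lines above, your architecture --- Kreps--Yan change of measure, decomposition under $Q$, Doob--Meyer, and transfer back to $\P$ via invariance of the semimartingale class (itself a Girsanov-type statement requiring proof) --- reproduces the classical Dellacherie--Meyer/Protter route rather than a new argument; the present paper deliberately avoids any change of measure by establishing the quasimartingale bound directly under $\P$ for suitably stopped processes, via Lemma \ref{BoundedByTrading} and the Mazur-lemma selection of an accumulation stopping time in Lemma \ref{LimitStopping}.
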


 Theorem BD has a long history, tracing back to the Rennes school of Metivier and Pellaumail (see for example \cite{Pe73,MePe77}), and then evolving in the Strasbourg school of  Meyer; it was first published in its present form in \cite{Me79c} and, independently, \cite{Bi79,Bi81}. Mokobodzki deserves particular credit (see for instance the discussion in \cite{DeMe82B}); however since the result is usually baptized after Bichteler and Dellacherie, we stick to this name.

We emphasize that  the definition of good integrators requires that the integrands are adapted. Simply dropping this assumption would amount to considering all simple processes
that are adapted to the constant filtration  $\mathcal{G}_t:= \mathcal{F}_1$, $0\leq t \leq 1$. Since $(\mathcal{G}_t)$-local martingales are constant, Theorem BD implies that every $(\mathcal{G}_t)$-good integrator has paths of finite variation. So, if one chooses to consider integrands which are not necessarily adapted (predictable) one is left with an unreasonably small class of integrators.

Since submartingales provide a filtration-dependent stochastic
 equivalent of increasing functions, we believe that the
 following reformulation of Theorem BD is quite intuitive.
\begin{theorem}\label{MainNewResult}
Let $S=(S_t)_{0\leq t\leq 1}$ be a bounded c\`adl\`ag adapted process. If $S$ is a good integrator then it is locally the difference of two c\`adl\`ag submartingales
\end{theorem}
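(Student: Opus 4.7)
I will mirror the classical deterministic argument that a finite-variation function equals the difference of its positive and negative variations, with $\P$-submartingales playing the role of increasing functions. The two submartingales will come from Rao's decomposition of quasi-martingales, applied after an equivalent change of measure that upgrades the $L^0(\P)$-boundedness implied by the good-integrator property to genuine $L^1$-boundedness.

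\textbf{Change of measure and $\Q$-quasi-variation.} Since $\mathcal I_S$ is continuous, the set $K := \{\mathcal I_S(H) : H \in \mathcal S,\ \|H\|_\infty \le 1\}$ is a convex, symmetric subset of $L^0(\P)$ that is bounded in probability. By the theorem of Yan (equivalently, Dellacherie--Mokobodzki), I obtain $\Q \sim \P$ with $d\Q/d\P \in L^\infty(\P)$ and a constant $C > 0$ such that $\|X\|_{L^1(\Q)} \le C$ for all $X \in K$. Testing against an arbitrary finite family of stopping times $0 \le \sigma_1 \le \cdots \le \sigma_{k+1} \le 1$ together with the simple integrand $H := \sum_i H^i \I_{(\sigma_i, \sigma_{i+1}]} \in \mathcal S$ with $H^i := \sign \E_\Q[S_{\sigma_{i+1}} - S_{\sigma_i} \mid \F_{\sigma_i}] \in \{-1,0,1\}$, the tower property yields
$\sum_i \E_\Q \bigl|\E_\Q[S_{\sigma_{i+1}} - S_{\sigma_i} \mid \F_{\sigma_i}]\bigr| = \E_\Q \mathcal I_S(H) \le C$; thus $S$ is a $\Q$-quasi-martingale of $\Q$-quasi-variation at most~$C$.

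\textbf{Rao decomposition and localization back to $\P$.} Rao's theorem for quasi-martingales yields c\`adl\`ag positive $\Q$-supermartingales $Y, Z$ with $S = Y - Z$. To promote this to the desired $\P$-statement, let $M_t := \E_\P[d\Q/d\P \mid \F_t]$ be the bounded strictly positive density process of $\Q$ with respect to $\P$, and select stopping times $\tau_n \uparrow 1$ $\P$-a.s.\ (from an announcing sequence of $\inf\{t : M_t \le 1/n\}$) for which $M_{\tau_n} \ge 1/n$ $\P$-a.s.; this is the most delicate point, because if the density process has downward jumps the naive hitting time gives $M_{\tau_n} < 1/n$. Bayes' rule then supplies the norm comparison $\|X\|_{L^1(\P)} \le n \|X\|_{L^1(\Q)}$ for every $\F_{\tau_n}$-measurable $X$, and applying it to $\mathcal I_S(H \I_{[0,\tau_n]}) \in K$ for $H$ chosen as in the previous step but with $\P$-conditional expectations in the signs, we conclude that $S^{\tau_n}$ has $\P$-quasi-variation at most $nC$. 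A second invocation of Rao's theorem---now under $\P$---produces c\`adl\`ag positive $\P$-supermartingales $Y^n, Z^n$ with $S^{\tau_n} = Y^n - Z^n = (-Z^n) - (-Y^n)$, exhibiting the stopped process as a difference of two c\`adl\`ag $\P$-submartingales. Since $\tau_n \uparrow 1$ $\P$-a.s., this is the desired local decomposition.
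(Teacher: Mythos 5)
Your overall architecture (good integrator $\Rightarrow$ quasimartingale after localization $\Rightarrow$ Rao's theorem) is the right one, but the route through an equivalent measure $\Q$ has a genuine gap precisely at the point you flag as delicate: the return from $\Q$ to $\P$. Two things go wrong there. First, Yan's theorem does not give $\|X\|_{L^1(\Q)}\le C$ for all $X\in K$; it gives only the linear bound $\sup_{X\in K}\E_\Q[X]\le C$ (hence $|\E_\Q[X]|\le C$ by symmetry of $K$). That linear bound is enough to make $S$ a $\Q$-quasimartingale, since $\MV_{\Q}(S,\pi)=\E_\Q[\mathcal{I}_S(H)]$ for the sign integrand $H$; but your back-transfer chain $\MV_{\P}(S^{\tau_n},\pi)=\E_\P[X]\le\E_\P[|X|]\le n\,\E_\Q[|X|]\le nC$ genuinely requires the $L^1(\Q)$ bound $\E_\Q[|X|]\le C$, which Yan does not provide and which would need a separate argument (it does hold for bounded $S$, via the discrete Doob decomposition under $\Q$ and an $L^2$ estimate on the martingale part, but that work is missing). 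Second, the stopping times $\tau_n$ with $M_{\tau_n}\ge 1/n$ cannot be obtained by ``announcing'' the hitting time $\inf\{t:M_t<1/n\}$: announcing sequences exist only for predictable stopping times, and the first passage of a c\`adl\`ag martingale below a level by a downward jump is in general totally inaccessible. So on $\{\tau_n\le 1\}$ you have no lower bound on $M_{\tau_n}$, and the Bayes comparison for $\F_{\tau_n}$-measurable variables fails exactly where it is needed. What you are implicitly trying to shortcut is Girsanov's theorem (invariance of the semimartingale property under equivalent change of measure); the classical Dellacherie--Meyer--Protter proof invokes that theorem explicitly instead of attempting a direct comparison of mean variations, because such a comparison produces covariation terms $\E_\P[(M_{t_{i+1}}-M_{t_i})(S_{t_{i+1}}-S_{t_i})\mid\F_{t_i}]$ that cannot be controlled without already knowing something about $S$ under $\P$.

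For contrast, the paper's proof avoids the change of measure entirely: it applies the good-integrator bound directly under $\P$ to the sign integrands $H^n=\sum_{t_i\in D_n}\I_{(t_i,t_{i+1}]}\sign\E[S_{t_{i+1}}-S_{t_i}\mid\F_{t_i}]$, stops at the first dyadic time where $(H^n\cdot S)$ exceeds a level to obtain $\MV(S^{\rho_n},D_n)\le C$ with $\P(\rho_n=\infty)\ge 1-\eps$, and then uses a Mazur-type forward convex combination of the indicators $\I_{[0,\rho_n]}$ to produce a single stopping time $\rho$ with $\MV(S^\rho)<\infty$; Rao's theorem then applies under $\P$ with no transfer needed. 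If you wish to salvage your route, the honest way to finish is Rao plus Doob--Meyer under $\Q$, then Girsanov to return to $\P$ --- but at that point you are reproducing the standard proof rather than giving an independent one.
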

We recall that a process defined on $[0,1]$ satisfies a property \emph{locally} if, \ for each $\eps>0$, there exist a $[0,1]\cup \{\infty\}$-valued
 stopping time $\rho$ such that  $S^{\rho}$ satisfies that property
 and  $\P(\rho=\infty)\geq 1-\eps$ (by definition $\mathcal{F}_{\infty}:=\mathcal{F}_1$).
 
\medskip
Our main contribution consists in a new proof of Theorem
\ref{MainNewResult}. Its equivalence with Theorem BD easily follows
from the Doob--Meyer decomposition theorem, of which in recent years
simple and elementary proofs have been obtained: we refer the reader
to  \cite{Ba96,Ja05, BeScVe11}.

\medskip
The most popular accounts on Theorem BD employ functional analytic machinery and change of measure techniques, as in Dellacherie-Meyer \cite{DeMe82B}. A modern version of this argument is given in Protter \cite{Pr05}; specifically, one applies a variant of the Hahn-Banach separation theorem (due to Yan \cite{Ya80}) to construct an equivalent measure $\Q$ under which the good integrator $S$ is a quasimartingale. Using the Theorems of Rao and Doob-Meyer it follows that $S$ is a $\Q$-semimartingale; finally, Girsanov's Theorem implies that $S$ is also a $\P$-semimartingale.

Some accounts use the notion of a good integrator (or a similar concept) as  starting point to develop the theory of stochastic integration  (see e.g. \cite{Met77, MePe80, Met82, Pr05}). This greatly simplifies many proofs; however, the useful link to the classical approach based on semimartingales is only obtained a posteriori by Theorem BD.
Since the proof of  Theorem BD given in the present article does not rely on stochastic calculus nor Girsanov's theorem the equivalence of the two approaches could be established from the start, enabling further simplifications.

For an alternative approach to Theorem BD, based on an orthogonal decomposition, see Lowther \cite{Lo11}.
A different idea is developed in \cite{BeScVe11} which, like the present paper, has an elementary proof based on discrete time arguments, and does not use change of measure techniques; it thus seems interesting to describe this approach more closely. 

In \cite{BeScVe11}, Theorem BD is obtained as a corollary  of the fact that  every bounded process $S$ satisfying a certain weak No-Arbitrage condition is a semimartingale (\cite[Theorem 1.6]{BeScVe11}). To prove the latter result, the authors take the discrete time Doob-decomposition $S=M^n+A^n$ of $S$ restricted to the dyadic times of $n^{\textrm{th}}$-generation and, repeatedly applying the  No-Arbitrage property, show that the sequences $(M^n)_n, (A^n)_n$ can be controlled on suitably chosen (random) intervals $[0, \tau_n]$. 
 Using carefully chosen  convex combinations 
 it is then possible to pass to limits and obtain processes $M,A$  on $[0,\tau]$ such that $M$ is a  martingale, $A$ has finite variation (but is not necessarily predictable), and $\tau$ is an arbitrarily ``big'' stopping time. 
  This comes with necessity to develop quite intricate estimates on the approximations as well as a somewhat complex limiting procedure which takes into account the approximating processes $(M^n)_n, (A^n)_n$ and the intervals $[0, \tau_n]$ \emph{simultaneously}.

\medskip

 This paper is organized as follows. 
After recalling  Rao's Theorem  in the next section, we provide the proof of Theorem \ref{MainNewResult} in Section 4. 
 The fact that Theorem \ref{MainNewResult} implies  Theorem BD is shown in detail in Section 5. Finally, in Section 6 we discuss certain
 ramifications of the Theorem BD (including the characterization \eqref{riemsum}).

We conclude this section with some definitions that will be used throughout the paper.
 As it is customary, we will denote by $X^+$ ($X^-$) the positive (negative) part of a random variable $X$, and by $D_n$ the $n$-th dyadic partition of $[0,1]$, i.e. $D_n:=\{0, 1/2^n,2/2^n,\ldots,1 \}$.
We will not be picky about the difference between functions and their equivalence classes.
Given a simple integrand $H$,  $H \cdot S$  denotes the process given by
$ (H \cdot S)_t:=\mathcal{I}_{S^t}(H).$ 
Recall that a family $F \subseteq L^0(\P)$ is bounded if for every $\eps>0$
there exists a constant $C$ such that $\P(|X|\geq C)\leq \eps$ for
every $X\in F$. A simple proof, analogous to the one for normed spaces, shows that a linear operator from a normed
space to $L^0(\P)$ is continuous iff  it is bounded, i.e., it maps bounded sets into bounded sets; we will use this fact without further mention.

\section{Quasimartingales}

To prove that a given function $f=f(t)$ can be written as a difference
of two increasing functions, one would typically show that $f$ has
finite variation. This has an analogue in the stochastic world; to
state it, we recall the notion of quasimartingale.

Let $S=(S_t)_{0\leq t\leq 1}$ be an adapted process such that $S_t\in L^1$ for all $t\in [0,1]$.
Given a partition $\pi=\{0=t_0< t_1<\ldots< t_n=1\}$ of $[0,1]$, the \emph{mean variation} of $S$ \emph{along} $\pi$ is defined as 
$$ \textstyle\Var(S, \pi)=\E\sum_{t_i\in \pi} \big|\E[S_{t_i}-S_{t_{i+1}}|\F_{t_i}]\big|.$$
 Note that the mean variation along $\pi$ is an increasing function of
 $\pi$, i.e.\ we have $\Var (S,\pi)\leq \Var (S,\pi')$, whenever $\pi'
 $ is a partition refining $\pi$: this follows from the conditional Jensen inequality  $ | \E(X|\mathcal{G}) | \leq \E( |X|\, |\mathcal{G})$.

By definition, $S$ is a \emph{quasimartingale}\footnote{ The study of
  quasimartingales goes back to Fisk \cite{Fi65},  Orey \cite{Or67}, Rao \cite{Ra69}, and Stricker \cite{St77}.} if it is adapted, $S_t\in L_1, t\in [0,1]$ and
the \emph{mean variation} $$\textstyle \Var(S):= \sup_{\pi} \Var(S,
\pi)$$ of $S$ is finite.
We will use that if $S$ is bounded and c\`adl\`ag then trivially $\Var(S)=\lim_{n} \Var (S,D_{n})$.

 The stochastic analogue of the fact that a function has bounded variation if and only if it can be written as a difference of two increasing functions is then provided by the following characterization of quasimartingales, usually known as  Rao's theorem; its standard proof, found in most stochastic calculus textbooks (e.g. 
\cite[Chapter 3, Theorem 17]{Pr05}, \cite[Chapter 6, Theorem 41.3]{RoWi00}, and \cite[Chapter 6, Theorem 40]{DeMe82B}), is both short and elementary.
\begin{theorem}\label{Rao's theorem}
A c\`adl\`ag process $S$ is a quasimartingale if and only it has a decomposition $S = Y- Z$ as the difference of two c\`adl\`ag submartingales $Y$ and $Z$.
\end{theorem}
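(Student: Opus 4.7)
Suppose $S = Y - Z$ with $Y, Z$ c\`adl\`ag submartingales. By the triangle inequality, for any partition $\pi$ one has $\Var(S,\pi) \leq \Var(Y,\pi) + \Var(Z,\pi)$. Since $Y$ is a submartingale, each conditional increment $\E[Y_{t_{i+1}} - Y_{t_i}|\F_{t_i}]$ is non-negative, so the absolute value drops and the sum telescopes: $\Var(Y,\pi) = \E[Y_1 - Y_0]$, independently of $\pi$. The same holds for $Z$, giving $\Var(S) \leq \E[Y_1 - Y_0] + \E[Z_1 - Z_0] < \infty$.

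\textbf{Hard direction.} Assume $S$ is a c\`adl\`ag quasimartingale. The plan is to perform the discrete Doob decomposition of $S$ along each $D_n$ and then pass to a continuous-time limit. Set $a^n_i := \E[S_{(i+1)/2^n} - S_{i/2^n}|\F_{i/2^n}]$, $A^n_{k/2^n} := \sum_{i<k} a^n_i$, and $M^n_{k/2^n} := S_{k/2^n} - A^n_{k/2^n}$; by construction $M^n$ is a $D_n$-martingale. Split $A^n = A^{n,+} - A^{n,-}$ via the positive and negative parts of the increments, $A^{n,\pm}_{k/2^n} := \sum_{i<k}(a^n_i)^{\pm}$, both $D_n$-increasing. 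Then $Y^n := M^n + A^{n,+}$ and $Z^n := A^{n,-}$ are $D_n$-submartingales with $S = Y^n - Z^n$ on $D_n$, and the quasimartingale assumption yields the uniform $L^1$ bound $\E[A^{n,+}_1 + A^{n,-}_1] = \Var(S, D_n) \leq \Var(S) < \infty$.

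\textbf{Passing to the limit.} The families $(A^{n,\pm}_1)_n$ are bounded in $L^1$, so by Koml\'os's theorem (or a Mazur-type convex-combination lemma) we can pick convex combinations of the processes $A^{n,\pm}$ converging a.s.\ along the countable dense set $\bigcup_n D_n$, via a diagonal extraction, to integrable increasing processes $A^{\pm}$ on the dyadic grid. Extend to $[0,1]$ by c\`adl\`ag regularization, using right continuity of the filtration. Set $Z := A^-$, an integrable c\`adl\`ag increasing process and hence a submartingale; let $M$ be the c\`adl\`ag martingale $M_t := \E[S_1 + A^-_1 - A^+_1|\F_t]$ and define $Y := M + A^+$, again a c\`adl\`ag submartingale. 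Passing to the limit in the identity $S = Y^n - Z^n$ on $D_n$ then gives $S = Y - Z$. The principal obstacle is precisely this limiting step: $L^1$-boundedness of $(A^{n,\pm}_1)$ does \emph{not} entail uniform integrability, so direct weak $L^1$-compactness (Dunford--Pettis) is unavailable; Koml\'os's lemma circumvents this, but reconciling the almost-sure limits simultaneously across all times and preserving the c\`adl\`ag submartingale structure requires a careful diagonalization and use of right continuity.
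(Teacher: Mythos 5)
The paper states this theorem without proof, citing the short standard argument in \cite{Pr05}, \cite{RoWi00}, \cite{DeMe82B}, so your proposal has to stand on its own. Your easy direction is correct. In the hard direction, however, there is a genuine gap at the limiting step, and it is not one that Koml\'os's theorem repairs. On $D_n$ the discrete Doob decomposition reads
$$S_t=\E\big[S_1-A^{n,+}_1+A^{n,-}_1\,\big|\,\F_t\big]+A^{n,+}_t-A^{n,-}_t,$$
so to recover $S=Y-Z$ with your $M_t=\E[S_1+A^-_1-A^+_1|\F_t]$ you must pass to the limit \emph{inside the conditional expectation}, i.e.\ you need $\E[A^{n,\pm}_1|\F_t]\to\E[A^{\pm}_1|\F_t]$. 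Koml\'os (or Mazur) only gives almost sure convergence of convex combinations of $A^{n,\pm}_1$; without uniform integrability this does not yield $L^1$-convergence, and the conditional expectations need not converge to those of the limit (conditional Fatou gives only an inequality). Equivalently, the a.s.\ limit $\tilde M_t:=S_t-A^+_t+A^-_t$ of the discrete martingale parts need not be a martingale, so your $Y$ and $Z$ need not satisfy $S=Y-Z$. For a quasimartingale the variables $A^{n,\pm}_1$ are only $L^1$-bounded; unlike the class-D situation exploited in Koml\'os-based proofs of Doob--Meyer, no uniform integrability is available here, and your closing remark that Koml\'os ``circumvents'' the lack of it is exactly the unproved step.

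The standard proof avoids this entirely by building the conditional expectations directly: for $t$ in a dense set one sets $Z^\pi_t:=\E\big[\sum_{t_i\in\pi,\ t_i\ge t}\big(\E[S_{t_i}-S_{t_{i+1}}|\F_{t_i}]\big)^-\,\big|\,\F_t\big]$ and observes, via conditional Jensen (the same monotonicity in $\pi$ that the paper records after defining $\MV(S,\pi)$), that $Z^\pi_t$ \emph{increases} as $\pi$ is refined while remaining bounded in $L^1$ by $\MV(S)$. Monotone convergence then yields a.s.\ \emph{and} $L^1$ limits with no appeal to uniform integrability; the limits are nonnegative supermartingales, and cadlag regularization under the usual conditions finishes the argument. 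If you wish to keep your Doob-decomposition framework, replace $A^{n,\pm}_t$ by the conditioned tails $\E[A^{n,\pm}_1-A^{n,\pm}_t\,|\,\F_t]$ and exploit this monotonicity; as written, the identification of the limit decomposition is not justified.
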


In dealing with the mean variation of stopped processes the following lemma is useful.
\begin{lemma}\label{SuperLemma} Let $S$ be a bounded process.
Given a partition $\pi$ and a  stopping time  $\rho$  define $\rho+:=\inf\{t \in \pi: t\geq \rho\}$. Then
\begin{align}\label{NiceRep}\Var(S^{\rho+}, \pi) =\textstyle{\E \sum_{t_i \in \pi} \I_{\{ t_i< \rho\}} \big|\E[S_{t_{i+1}}-S_{t_{i}}|\F_{t_i}]\big|}\end{align} 
and $|\Var(S^{\rho+}, \pi)-\Var(S^{\rho}, \pi)| \leq 2\|S\|_\infty.$
\end{lemma}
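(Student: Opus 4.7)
The plan is to prove both assertions in parallel by a case analysis on how the stopped increments of $S^{\rho+}$ and $S^\rho$ behave on each subinterval $(t_i, t_{i+1}]$ of $\pi$. The crucial observation is the dichotomy coming from the definition of $\rho+$: if $t_i \in \pi$ and $t_i \geq \rho$, then $t_i$ belongs to $\{t \in \pi : t \geq \rho\}$ and hence $\rho+ \leq t_i$; conversely, if $t_i < \rho$ but $t_{i+1} \geq \rho$, then since $t_0, \ldots, t_i$ are all $< \rho$, the minimality of $\rho+$ forces $\rho+ = t_{i+1}$. In particular, on $\{t_i < \rho\}$ one has $t_{i+1} \leq \rho+$.

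For \eqref{NiceRep}, I would first verify the pointwise identity
\begin{equation*}
S^{\rho+}_{t_{i+1}} - S^{\rho+}_{t_i} = \I_{\{t_i < \rho\}} \bigl(S_{t_{i+1}} - S_{t_i}\bigr).
\end{equation*}
On $\{t_i \geq \rho\}$, both $t_i \wedge \rho+$ and $t_{i+1} \wedge \rho+$ equal $\rho+$, so the left-hand side vanishes; on $\{t_i < \rho\}$, the observation above gives $t_i < \rho+$ and $t_{i+1} \leq \rho+$, so both stopped values coincide with the unstopped ones. Since $\rho$ is a stopping time, $\{t_i < \rho\} \in \F_{t_i}$ and pulls out of the conditional expectation; taking $\E[\cdot | \F_{t_i}]$, then the absolute value, summing over $t_i \in \pi$, and taking expectation yields \eqref{NiceRep}.

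For the second assertion, I would telescope the defect $g(t) := S_{t \wedge \rho+} - S_{t \wedge \rho}$ across $\pi$. Writing
\begin{equation*}
\Delta_i := \bigl(S^{\rho+}_{t_{i+1}} - S^{\rho+}_{t_i}\bigr) - \bigl(S^\rho_{t_{i+1}} - S^\rho_{t_i}\bigr) = g(t_{i+1}) - g(t_i),
\end{equation*}
note that $\Delta_i$ vanishes whenever $t_{i+1} \leq \rho$ (both $g$-values are $0$) or $t_i \geq \rho$ (then $t_i \geq \rho+$ by the dichotomy, so both $g$-values equal $S_{\rho+} - S_\rho$). Only the unique subinterval with $t_i < \rho < t_{i+1}$ can contribute, and there $|\Delta_i| = |S_{\rho+} - S_\rho| \leq 2\|S\|_\infty$. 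Combining the reverse triangle inequality $\bigl||a| - |b|\bigr| \leq |a - b|$ with conditional Jensen,
\begin{equation*}
|\Var(S^{\rho+}, \pi) - \Var(S^\rho, \pi)| \leq \E \sum_{t_i \in \pi} \bigl|\E[\Delta_i \mid \F_{t_i}]\bigr| \leq \E \sum_{t_i \in \pi} |\Delta_i| \leq 2\|S\|_\infty.
\end{equation*}

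The only mildly delicate step is the pointwise identity underlying \eqref{NiceRep}: one must verify that on $\{t_i < \rho\}$ the partition point $t_{i+1}$ does not overshoot $\rho+$, which uses the definition of $\rho+$ in an essential way. Once this dichotomy is in hand, both conclusions follow by routine manipulations with conditional Jensen and the boundedness of $S$.
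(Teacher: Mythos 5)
Your proof is correct and follows essentially the same route as the paper's: the identity \eqref{NiceRep} rests on the same pointwise observation that $S^{\rho+}_{t_{i+1}}-S^{\rho+}_{t_i}=\I_{\{t_i<\rho\}}(S_{t_{i+1}}-S_{t_i})$ with $\{t_i<\rho\}\in\F_{t_i}$ pulled out of the conditional expectation, and the second bound uses the same reverse-triangle/conditional-Jensen estimate combined with the fact that only the single subinterval straddling $\rho$ contributes. The extra detail you supply on the dichotomy defining $\rho+$ is exactly the verification the paper leaves implicit.
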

\begin{proof}
To obtain \eqref{NiceRep}, observe that for each $t_i\in \pi$ 
$$\E[S^{\rho+}_{t_{i+1}}-S^{\rho+}_{t_{i}}|\F_{t_i}]=
\E[(S_{t_{i+1}}-S_{t_{i}})\I_{\{ t_i<\rho\}}|\F_{t_i}]=\I_{\{ t_i<\rho\}}\E[(S_{t_{i+1}}-S_{t_{i}})|\F_{t_i}].$$
Given processes $S', S''$ the conditional Jensen inequality implies
$$\textstyle |\Var(S',\pi)-\Var(S'', \pi)|\leq \E \sum_{t_i\in \pi}|(S'_{t_{i+1}}-S'_{t_{i}})-(S''_{t_{i+1}}-S''_{t_{i}})|.$$ Applying this to $S'=S^{\rho}, S''=S^{\rho+}$ concludes the proof, as the only (possibly) non-zero term in the above sum is the one for which $\rho\in [t_i, t_{i+1}).$
 \end{proof}

\section{The technical core}

The aim of this section is to establish Theorem \ref{MainNewResult}.
To motivate our approach, assume that a continuous  function $f:[0,1]\to \RR$ gives rise to a  Riemann-Stieltjes integral
$$\textstyle h\mapsto \int h(t)\,  df(t)$$
which is continuous on the space of piecewise constant functions $h:[0,1] \to \RR$, endowed with the sup norm. Then $f$ has finite total variation; indeed the sequence of  piecewise constant functions $$\textstyle h^n:=\sum_{t_i\in D_n} \I_{(t_i, t_{i+1}]}\sign\big(f(t_{i+1})-f(t_{i})\big)$$  is bounded uniformly
and
$$\textstyle \int_{0}^1 h^n\, df =
\sum_{t_i\in D_n} |f(t_{i+1})-f(t_{i})| $$
converges to the total variation of $f$.
The subsequent proof is merely a translation of this standard argument
to the stochastic setting, where the integrands are assumed to be adapted.

\begin{lemma}\label{BoundedByTrading} 
Let $S=(S_t)_{0\leq t\leq 1}$ be a c\`adl\`ag bounded adapted good integrator. 
 Then for every $\eps>0$ there exist a constant $C$ and a sequence of
 $[0,1]\cup \{\infty\}$-valued  stopping times  $(\rho_n)_n$ such that $\P(\rho_n=\infty)\geq 1-\eps$ and $\Var (S^{\rho_n},D_n)\leq C$.
\end{lemma}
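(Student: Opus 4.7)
The plan is to mimic the deterministic argument sketched just above the lemma, replacing $\sign(f(t_{i+1})-f(t_i))$, which is not adapted in the stochastic setting, with its ``best adapted proxy''
$$H^{n,i}:=\sign\bigl(\E[S_{t_{i+1}}-S_{t_i}\mid \F_{t_i}]\bigr),\qquad t_i\in D_n,$$
and integrating the simple integrand $H^n:=\sum_{t_i\in D_n}H^{n,i}\I_{(t_i,t_{i+1}]}$ against $S$. A direct computation using the tower property shows
$$\textstyle \E[\mathcal{I}_S(H^n)]=\sum_{t_i\in D_n}\E\bigl|\E[S_{t_{i+1}}-S_{t_i}\mid\F_{t_i}]\bigr|=\Var(S,D_n),$$
so $H^n$ is the stochastic analogue of the integrand used in the deterministic argument.

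To obtain the desired stopping times, I would invoke the hypothesis that $\mathcal I_S:\mathcal S \to L^0(\P)$ is continuous, hence bounded, to pick a constant $C$ such that $\P(|\mathcal I_S(H)|>C)\le\eps$ for every $H\in\mathcal S$ with $\|H\|_\infty\le 1$. Then I would define
$$\rho_n:=\inf\{t\in D_n:|(H^n\cdot S)_t|>C\}\quad(\inf\emptyset:=\infty)$$
and consider the truncated integrand $\tilde H^n:=\sum_{t_i\in D_n}H^{n,i}\I_{\{t_i<\rho_n\}}\I_{(t_i,t_{i+1}]}$, which belongs to $\mathcal S$ with $\|\tilde H^n\|_\infty\le 1$ because $\{t_i<\rho_n\}\in\F_{t_i}$. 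Since $\mathcal I_S(\tilde H^n)=(H^n\cdot S)_{\rho_n\wedge 1}$ coincides, on $\{\rho_n\le 1\}$, with the first value of $H^n\cdot S$ that exceeds $C$ in modulus, the inclusion $\{\rho_n\le 1\}\subseteq\{|\mathcal I_S(\tilde H^n)|>C\}$ immediately yields $\P(\rho_n=\infty)\ge 1-\eps$.

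The second computation is to repeat the tower--property calculation for $\tilde H^n$ (inserting $\I_{\{t_i<\rho_n\}}$ at the appropriate step), obtaining via Lemma \ref{SuperLemma}
$$\textstyle \E[\mathcal I_S(\tilde H^n)] =\E\sum_{t_i\in D_n}\I_{\{t_i<\rho_n\}}\bigl|\E[S_{t_{i+1}}-S_{t_i}\mid\F_{t_i}]\bigr|=\Var(S^{\rho_n+},D_n).$$
Since $|(H^n\cdot S)_{\rho_n\wedge 1}|$ is bounded above by $C$ plus the single jump $|H^{n,\rho_n-2^{-n}}||S_{\rho_n}-S_{\rho_n-2^{-n}}|\le 2\|S\|_\infty$, we get the deterministic bound $|\mathcal I_S(\tilde H^n)|\le C+2\|S\|_\infty$ almost surely, hence $\Var(S^{\rho_n+},D_n)\le C+2\|S\|_\infty$. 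The second assertion of Lemma \ref{SuperLemma} upgrades this to $\Var(S^{\rho_n},D_n)\le C+4\|S\|_\infty$, which is the desired conclusion.

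The main delicate point is the passage from $L^0$-boundedness (all that the good-integrator hypothesis directly supplies) to the $L^1$-expectation bound needed to control $\Var(S^{\rho_n+},D_n)$; this is exactly what the stopping device $\rho_n$ accomplishes, at the price of the harmless additive term $2\|S\|_\infty$ coming from the unavoidable overshoot at $\rho_n$. Everything else is the adapted translation of the deterministic $\sign$-trick and a routine application of Lemma \ref{SuperLemma}.
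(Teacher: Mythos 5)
Your proposal is correct and follows essentially the same route as the paper's proof: the adapted $\sign$-integrand, the good-integrator bound giving a constant $C$, the stopping time at the first dyadic time the discrete integral gets too large, and the tower-property computation identifying the expectation of the stopped integral with $\Var(S^{\rho_n},D_n)$ via Lemma \ref{SuperLemma}. The only (harmless) differences are that you stop when $|(H^n\cdot S)_t|$ exceeds $C$ rather than when $(H^n\cdot S)_t$ exceeds $C-2\|S\|_\infty$, yielding the constant $C+4\|S\|_\infty$ instead of $C$, and that the detour through $\rho_n+$ is unnecessary since $\rho_n$ is already $D_n\cup\{\infty\}$-valued, so $\rho_n+=\rho_n$.
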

\begin{proof}

Since $S$ is a good integrator, given $\eps>0$ there exists $C>0$ so that for all simple processes 
$H$ with $\| H\|_\infty\leq 1$ we have $\P((H\cdot S)_1\geq C- 2\|S\|_\infty )\leq \eps.$ 
 For each $n$ we define the simple process $H^{n}$ and the stopping time $\rho_{n}$ as
\begin{align*}
\textstyle H^{n}&:= \textstyle\sum_{t_i\in D_n} \I_{(t_i, t_{i+1} ]} \sign \big( \E[S_{t_{i+1}}-S_{t_i}|\F_{t_i}] \big) ,\\
\rho_{n}&:=\inf \{ t\in {D_n}: (H^{n}\cdot S)_t\geq C-2\|S\|_\infty\}.
\end{align*}
Notice that, on the set $\{\rho_n<\infty\}$,
  $$(H^n 1_{(0,\rho_n]})\cdot S=(H^n\cdot S)^{\rho_n} \text{ satisfies }  (H^n\cdot S)^{\rho_n}_1\geq C-2\|S\|_\infty ,$$ and thus $\P(\rho_n=\infty)\geq 1-\eps$.
Moreover, since the increments of $S$ are bounded by $2\|S\|_\infty\, $,
$C\geq (H^n\cdot S)^{\rho_n}_1$ holds, so we find, with the help of
lemma \ref{SuperLemma} , 
\begin{align*} C\geq  \E (H^n\cdot S)^{\rho_n}_1 =\ &\E\sum_{t_i\in D_n}\I_{\{t_i< \rho_n\}}  \sign \big( \E[S_{t_{i+1}}-S_{t_i}|\F_{t_i}] \big) (S_{t_{i+1}}-S_{t_i})=\\
=\ &\E\sum_{t_i\in D_n} \I_{\{t_i< \rho_n\}}\Big|\E[ (S_{t_{i+1}}-S_{t_i})|\F_{t_i}]\Big|=\Var(S^{\rho_n},D_n).\qedhere
\end{align*}
\end{proof}

Given that $\Var (S^{\rho_n},D_k)\leq C$ for every $k\leq n$, it is
desirable to define an ``accumulation stopping time'' $\rho$ of the stopping times $(\rho_n)_ n$, so that $\Var (S^{\rho},D_k)\leq C$ will hold for every $k$, proving that $S^{\rho}$ is a quasimartingale.  
 Ideally, we would want $\rho$ to be ``as big as the $\rho_n$'', and yet such that $\rho\leq \rho_{n_k}$ holds for some subsequence $n_k$. This is not quite possible; however, after rephrasing  the previous inequality as
$\I_{[0,\rho]}\leq \I_{[0,\rho_{n_k}]},$ we can soften this requirement (passing to forward convex combinations instead of subsequences), thus making it compatible with $\rho$ being  ``big''.
A similar technique is also used in \cite[Proposition 3.6]{BeScVe10}.

\begin{lemma}\label{LimitStopping}
Assume that $(\rho_n)_n$ is a sequence of $[0,1]\cup\{\infty\}$-valued stopping times such that $\P(\rho_n=\infty)\geq 1-\eps,$  $n\geq 1$ for some $\eps>0$. Then there exists a stopping time $\rho$ and for each $n\geq 1$ convex weights $\mu_n^n,\ldots, \mu_{N_n}^n$ such that\footnote{We note that the constant $2$ in \eqref{SmallerSense} can be replaced by $1+\delta$, for $\delta>0$ in which case one is only guaranteed to find $\rho$ satisfying $\P(\rho= \infty)\geq 1-\eta \eps$ for $\eta>(1-(1+\delta)^{-1})^{-1} $. But we do not need this.} 
 $\P(\rho =\infty)\geq 1-3\eps$
 and for all $n\geq 1$
\begin{align}\label{SmallerSense}
\textstyle \I_{[0,\rho]}\leq 2 \sum_{k=n}^{N_n} \mu_k^n \I_{[0,\rho_k]}.
\end{align}
\end{lemma}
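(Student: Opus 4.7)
My plan is to obtain $\rho$ as the first time the infimum over $n$ of suitable \emph{forward} convex combinations of the indicator processes $\I_{[0,\rho_k]}$ falls below $1/2$; the convex combinations are produced by Mazur's lemma so that all of them approximate a common $L^2$-limit. Simply intersecting the events $\{\rho_n=\infty\}$ would destroy any reasonable probability bound, and a per-$n$ Markov estimate does not survive the infimum over $n$, which is exactly the difficulty that Mazur's lemma resolves.

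\textbf{Setup.} I would set $Z_k := \I_{\{\rho_k = \infty\}} \in [0,1]$, giving $\E Z_k \geq 1 - \eps$. Being bounded in $L^2(\P)$, the sequence $(Z_k)$ admits a weakly convergent subsequence with limit $Z^\ast$, which is automatically $[0,1]$-valued and satisfies $\E Z^\ast \geq 1 - \eps$. Applying Mazur's lemma to the tails of this subsequence, for each $n$ I pick convex weights $\mu_n^n,\ldots,\mu_{N_n}^n$ supported on indices $k \geq n$ such that $\tilde Z_n := \sum_{k=n}^{N_n} \mu_k^n Z_k$ satisfies $\|\tilde Z_n - Z^\ast\|_2 \leq \eta_n$, for a sequence $(\eta_n)$ I get to choose.

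\textbf{Construction of $\rho$ and verification of \eqref{SmallerSense}.} With these same weights, set $G_n(t) := \sum_{k=n}^{N_n} \mu_k^n \I_{[0,\rho_k]}(t)$ and $g(t) := \inf_n G_n(t)$. Each $G_n$ is adapted, decreasing in $t$, and left-continuous in $t$ (since $t \mapsto \I_{\{t \leq \rho_k\}}$ is), and $g$ inherits all three properties. I define $\rho := \inf\{t \in [0,1] : g(t) < 1/2\}$ with $\inf\emptyset := \infty$. The representation $\{\rho < t\} = \bigcup_{s \in \Q \cap [0,t)} \bigcup_n \{G_n(s) < 1/2\} \in \F_t$ shows $\rho$ is a stopping time, and left-continuity of $g$ combined with the definition of $\rho$ forces $g(\rho) \geq 1/2$ whenever $\rho < \infty$; by monotonicity $g(t) \geq 1/2$ for every $t \leq \rho$, whence $\I_{[0,\rho]}(t) \leq 2 G_n(t)$ for every $n$ and $t$, i.e.\ \eqref{SmallerSense}.

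\textbf{Probability bound.} Monotonicity of $g$ yields $\{\rho = \infty\} = \{g(1) \geq 1/2\}$, and $\I_{[0,\rho_k]}(1) \geq Z_k$ gives $g(1) \geq \inf_n \tilde Z_n$. So it remains to show $\P(\inf_n \tilde Z_n \geq 1/2) \geq 1 - 3\eps$. Fixing any $\delta \in (0,1/6)$, Markov on $1 - Z^\ast$ gives $\P(Z^\ast < 1/2 + \delta) \leq \eps/(1/2 - \delta) < 3\eps$, while choosing $(\eta_n)$ with $\sum_n \eta_n^2$ sufficiently small and applying Chebyshev yields $\P(\exists n : |\tilde Z_n - Z^\ast| \geq \delta) \leq \eps'$ for any prescribed $\eps' > 0$. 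On the complement of these two events one has $\tilde Z_n \geq Z^\ast - \delta \geq 1/2$ for every $n$, and tuning $\eps'$ so that the two error probabilities sum to at most $3\eps$ (which is possible precisely because $1/(1/2 - \delta) < 3$) closes the argument. The conceptually delicate point is that \eqref{SmallerSense} must hold for every $n$ with a \emph{single} $\rho$, which is what compels the $\tilde Z_n$ to share a common $L^2$-limit $Z^\ast$: this is what Mazur's lemma provides, and the summable Chebyshev estimate is what converts the coordinated $L^2$-control into the uniform-in-$n$ pointwise lower bound needed in the definition of $\rho$.
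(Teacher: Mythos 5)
Your proof is correct and follows essentially the same route as the paper: Mazur's lemma applied to the indicators $\I_{\{\rho_k=\infty\}}$, with $\rho$ defined as the first time the infimum over $n$ of the convex combinations $\sum_{k}\mu_k^n\I_{[0,\rho_k]}$ drops below $1/2$. The only difference is cosmetic: where the paper passes to an a.s.\ convergent subsequence and invokes Egoroff's theorem to obtain the lower bound uniformly in $n$, you extract the same uniformity from a summable Chebyshev estimate on $\|\tilde Z_n - Z^\ast\|_2$.
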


\begin{proof}[Proof of Lemma \ref{LimitStopping}] Recall the following classical result by Mazur:
if $(f_n)_n$ is a bounded sequence in a Hilbert space then there exist vectors $ g_n\in \conv(f_n, f_{n+1},\ldots)$, $n\geq 1$ such that $(g_n)_n$ converges in Norm.\footnote{This can be seen as a consequence of weak compactness combined with the fact that weak and strong closure coincide for convex sets. Alternatively one may simply pick the elements $g_n$ to have (asymptotically) minimal norm in $\conv(f_n, f_{n+1},\ldots)$, $n\geq 1$.}
\medskip
We apply this to the random variables $X_n=\I_{\{\rho_n=\infty\}}\in L^2(\P), n\geq 1$ to obtain for each $n$ convex weigths $\mu_n^n,\ldots, \mu_{N_n}^n$ such that 
$$\textstyle Y_n:= \mu_n^n X_n+\ldots+\mu_n^{N_n} X_{N_n}$$
converges to some random variable $X$ in $L^2(\P)$.
Relabeling sequences if necessary, we assume that the convergence holds also almost surely.

From $X\leq 1$ and $\E[X]\geq 1-\eps$ we deduce that $\P(X< 2/3)< 3 \eps$. Since $\P(\lim_m Y_{m}\geq 2/3)> 1-3\eps$, by Egoroff's theorem we deduce that there exists a set $A$ with $\P(A)\geq 1-3\eps$ such that $Y_n \geq 1/2$ on the set $A$,  for all $n$ greater or equal than some $n_0\in\N$, which we can assume to be equal to $1$. 

We now define the desired stopping time $\rho$ by
$$\textstyle{\rho=\inf_{n\geq 1} \inf\{t: \mu_n^n  \I_{[0,\rho_n]}(t)+\ldots+\mu_n^{N_n}  \I_{[0,\rho_{N_n}]}(t)< 1/2\}}.$$
Then clearly (\ref{SmallerSense}) holds, and  from $A\subseteq \{\rho=\infty\}$ we obtain $\P(\rho=\infty)\geq 1-3\eps.$
\end{proof}

We are now in the position to complete the proof of Theorem \ref{MainNewResult}

\begin{proof}[Proof of Theorem \ref{MainNewResult}.]
Given $\eps>0$, pick $C$, $(\rho_n)_n$ and $\rho$ according to Lemma \ref{BoundedByTrading} resp.\ Lemma \ref{LimitStopping}.
Fixing $n\geq 1$ we obtain from \eqref{SmallerSense} that
\begin{align}\label{ShorterTrick}\E\!\! \sum_{t_i\in D_n}\!\!\I_{\{t_i<\rho\}} \Big|\E[S_{t_{i+1}}-S_{t_i}\|\F_{t_i}]\Big| \leq  
 2 \E\!\!\!\sum_{t_i\in D_n}\!\sum_{k=n}^{N_n}\mu_k^n \I_{\{t_i < \rho_k\} } \Big| 
 \E[S_{t_{i+1}}-S_{t_i}|\F_{t_i}]\Big|.
 \end{align}
 By Lemma \ref{SuperLemma}, $\Var(S^\rho, D_n)$ differs from the left side of \eqref{ShorterTrick} by at most $2\|S\|_\infty$. Applying Lemma \ref{SuperLemma} once more, the right side of \eqref{ShorterTrick} is bounded by 
 $$\textstyle 2\sum_{k=n}^{N_n}\mu_k^n (\Var(S^{\rho_k},D_n) +2\|S\|_\infty) \leq 2C+4\|S\|_\infty.$$
 Combining these facts and letting $n\to \infty$ we conclude  $\MV(S^\rho)\leq 2 C + 6\|S\|_\infty$. By Rao's theorem \ref{Rao's theorem} this yields Theorem \ref{MainNewResult}.
\end{proof}
\section{Every good integrator is a Semimartingale} 
In this section, for the convenience of the reader, we show in detail how Theorem BD follows from  Theorem \ref{MainNewResult}; all arguments are however quite standard. For a proof of the following lemma one can also consult \cite[Proposition 4.25(b), Chapter 1]{JacodShir:02}.

\begin{lemma}\label{LocalProperty}
 Let a process $S$ be locally a 
 semimartingale. Then $S$ is a 
 semimartingale.
\end{lemma}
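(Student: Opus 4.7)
My plan is a pasting argument. By the definition of ``locally'', I choose stopping times $(\rho_n)_n$ with $\P(\rho_n=\infty)\ge 1-2^{-n}$ such that each $S^{\rho_n}$ is a semimartingale, written as $S^{\rho_n}=M^n+A^n$ with $M^n$ a c\`adl\`ag local martingale and $A^n$ a c\`adl\`ag adapted finite-variation process. The identity $S^{\sigma\vee\tau}=S^\sigma+S^\tau-S^{\sigma\wedge\tau}$ (an easy case-by-case check on $t\wedge\sigma$ versus $t\wedge\tau$) shows that the semimartingale property is preserved under maxima of stopping times, so I may replace $\rho_n$ by $\rho_1\vee\cdots\vee\rho_n$ to arrange that $(\rho_n)_n$ is increasing. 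Replacing $M^n,A^n$ by $(M^n)^{\rho_n},(A^n)^{\rho_n}$ still gives a valid decomposition of $S^{\rho_n}=(S^{\rho_n})^{\rho_n}$, so I may further assume each $M^n$ and $A^n$ is stopped at $\rho_n$. By Borel--Cantelli, a.e.\ $\omega$ satisfies $\rho_n(\omega)=\infty$ for all sufficiently large $n$.

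Next I paste the decompositions by setting
\[
M:=M^1+\sum_{n\ge 2}\bigl(M^n-(M^n)^{\rho_{n-1}}\bigr),\qquad A:=A^1+\sum_{n\ge 2}\bigl(A^n-(A^n)^{\rho_{n-1}}\bigr).
\]
The summand of index $n$ vanishes on $[0,\rho_{n-1}]$ and is constant on $[\rho_n,1]$ (using the stopping of $M^n$ at $\rho_n$), so path-by-path only finitely many terms are nonzero almost surely. A telescoping identity on each interval $(\rho_{k-1},\rho_k]$ then yields $M+A=S$ on $[0,1]$.

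It remains to verify that $M$ is a local martingale and $A$ is a c\`adl\`ag adapted finite-variation process. The key calculation, via the same case analysis used for the telescoping, is
\[
M^{\rho_m}=M^1+\sum_{n=2}^{m}\bigl(M^n-(M^n)^{\rho_{n-1}}\bigr),
\]
a finite sum of local martingales (each $M^n$ and its stopped version are local martingales), hence itself a local martingale. Since $\rho_m=\infty$ eventually a.s., a diagonalization over the localizing sequences of the various $M^{\rho_m}$ produces a single localizing sequence for $M$, so $M$ is a local martingale. The c\`adl\`ag, adapted and finite-variation properties of $A$ follow directly from those of each summand together with the a.s.\ finiteness of the sum. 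The main obstacle is precisely the local-martingale verification for $M$: one cannot match the $M^n$ across different $n$ by uniqueness of decomposition, because a local martingale of finite variation need not vanish and so two decompositions of $S^{\rho_n}$ may genuinely differ. This is what forces the explicit telescoping construction above, followed by the ``locally a local martingale is a local martingale'' diagonalization.
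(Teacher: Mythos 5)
Your proof is correct and follows essentially the same pasting/telescoping strategy as the paper, which sets $\sigma_k:=\inf_{n\geq k}\rho_n$, writes $S=S^{\sigma_1}+\sum_{k}(S^{\sigma_{k+1}}-S^{\sigma_k})$, and rearranges the corresponding martingale and finite-variation parts. Your version is in fact somewhat more careful than the paper's one-line telescoping: by stopping $M^n,A^n$ at $\rho_n$ and arranging each summand to vanish off $(\rho_{n-1},\rho_n]$ you guarantee that pathwise only finitely many terms are nonzero, and you justify the local-martingale property of $M$ via the stopped partial sums $M^{\rho_m}$ --- points the paper leaves implicit.
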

\begin{proof}

If $S=(S_t)_{t\in [0,1]}$ is locally a semimartingale there exists a sequence $(\sigma_n)_n$ of stopping times such that $\P(\sigma_n\leq 1)\to 0$ and, for each $n$, a local martingale $M_n$ and a process $A_n$ of finite variation such that $S^{\sigma_n}=M_n+A_n$. 
By passing to a subsequence $n_i$ s.t. $\P(\sigma_{n_i}\leq 1)\leq 2^{-i}$ and then replacing $\sigma_k$ with $\rho_k:= \inf_{i\geq k} \sigma_{n_i}$ we can assume that $(\sigma_n)_n$  is increasing (indeed $(\rho_k)_k$ is increasing and $\P(\rho_{k}\leq 1)\leq 2^{-(k-1)}\to 0$). 
Since $S^{\sigma_n}=M_{n}^{\sigma_n}+A_{n}^{\sigma_n}$ equals  $(S^{\sigma_{n+1}})^{\sigma_n}=M_{n+1}^{\sigma_n}+A_{n+1}^{\sigma_n}$, 
\begin{align*}
S&= S^{\sigma_1}+(S^{\sigma_2}-S^{\sigma_1})+(S^{\sigma_3}-S^{\sigma_2})+ \ldots\\
&= [ M^{\sigma_1}_1+( M_2^{\sigma_2}- M^{\sigma_1}_2)+ \ldots] +[ A_1^{\sigma_1}+( A_2^{\sigma_2}- A_2^{\sigma_1})+ \ldots]=:M+A,
\end{align*}
where for each $(t,\omega)$ only one term in each sum is non-zero.
Since $A$ is of finite variation and $M$ is locally a local martingale, and thus a local martingale (see \cite[Lemma 1.35(a), Chapter 1]{JacodShir:02}), $S$ is a semimartingale.  
\end{proof}
Notice that the previous lemma implies that Theorem \ref{BichtelerDellacherie} applies also when the time index $[0,1]$ is replaced with $[0,\infty)$. 
Recall that a process $X$ is of class $D$ if the family $\{X_{\sigma}:\sigma\mbox{ stopping time}\}$ is uniformly integrable.
\begin{lemma}
\label{LocClassD}
Let $S=(S_t)_{0\leq t \leq 1}$ be a c\`adl\`ag submartingale. Then $S$ is locally of class D.
\end{lemma}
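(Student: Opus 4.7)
The plan is to localize by the first time the modulus $|S|$ exceeds a large level. Concretely, for each $N\ge 1$ I would set
$$\rho_N:=\inf\{t\in[0,1]:|S_t|\ge N\},\qquad\inf\emptyset:=\infty.$$
Since every c\`adl\`ag path on the compact interval $[0,1]$ is bounded, $\sup_{t\in[0,1]}|S_t|<\infty$ almost surely, and hence $\P(\rho_N=\infty)=\P(\sup_{t\in[0,1]}|S_t|<N)\to 1$ as $N\to\infty$. Given $\eps>0$ I can thus pick $N$ large enough that $\P(\rho_N=\infty)\ge 1-\eps$, which takes care of the ``big stopping time'' part of the definition of \emph{locally}.

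The substance of the proof is to check that $S^{\rho_N}$ is of class D. The key observation will be the pointwise bound
$$|S^{\rho_N}_\sigma|\le N+|S_{\rho_N\wedge 1}|,$$
valid for every stopping time $\sigma$. Indeed, on $\{\sigma<\rho_N\}$ the definition of $\rho_N$ forces $|S_\sigma|<N$, while on $\{\sigma\ge\rho_N\}$ one must have $\rho_N\le 1$, so $S^{\rho_N}_\sigma=S_{\rho_N}=S_{\rho_N\wedge 1}$. To close the argument I would then invoke classical optional sampling for the c\`adl\`ag submartingale $S$ at the bounded stopping time $\rho_N\wedge 1$ to conclude that $S_{\rho_N\wedge 1}\in L^1$ (bounding the positive part by $\E[S^+_{\rho_N\wedge 1}]\le\E[S^+_1]$ since $S^+$ is a submartingale, and the negative part by $\E[S^-_{\rho_N\wedge 1}]=\E[S^+_{\rho_N\wedge 1}]-\E[S_{\rho_N\wedge 1}]\le\E[S^+_1]-\E[S_0]$ using $\E[S_{\rho_N\wedge 1}]\ge\E[S_0]$). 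The family $\{S^{\rho_N}_\sigma\}_\sigma$ is then dominated by the single integrable random variable $N+|S_{\rho_N\wedge 1}|$, hence uniformly integrable, which is precisely the assertion that $S^{\rho_N}$ is of class D.

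I do not foresee any real obstacle here; the proof is just a standard localization argument combined with optional sampling at a bounded stopping time.
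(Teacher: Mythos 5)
Your proposal is correct and follows essentially the same route as the paper: stop at the first time $|S|$ reaches level $N$, bound $|S^{\rho_N}_\sigma|$ by $N+|S_{\rho_N\wedge 1}|$, and use optional sampling at the bounded stopping time $\rho_N\wedge 1$ to get an integrable dominating random variable. The extra details you supply (why $\P(\rho_N=\infty)\to 1$ and why $S_{\rho_N\wedge 1}\in L^1$) are accurate elaborations of steps the paper leaves implicit.
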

\begin{proof}
Define the stopping time $T_n:= \inf\{ t\in [0,1]: |S_t|\geq n \}.  $ Then if $\sigma$ is an arbitrary stopping time we have that 
$|S^{T_n}_{\sigma}|\leq n+ |S_{1\wedge T_n}| $. By the optional sampling theorem\footnote{For a proof see \cite[Theorem 3.22]{KaSh91} or \cite[Theorem II.77.1]{RoWi00}.}  $S_{1\wedge T_n}$ is integrable, showing that $\{S_\sigma^{T_n}:\sigma \mbox{ stopping time}\}$ is uniformly integrable.
\end{proof}

\begin{proof}[Proof of Theorem BD]
We note that $S$ can be written as the sum two adapted processes, one of finite
variation and one locally bounded: indeed, since $S$ is
c\`adl\`ag,  $\Delta S_t:=S_t-S_{t-}$ and $J_t:=\sum_{0< s \leq t}
\Delta S_t \I_{\{|\Delta S_t |\geq 1\}}$ are well defined (the sum
defining $J_t(\omega)$ is finite for each $t,\omega$). Since $J$ has
finite variation and is adapted, and $S-J$ has bounded jumps, $S=J+(S-J)$ is a
decomposition as required. Notice that $J$ is a c\`adl\`ag good
integrator (since it has finite variation), and so such is
$S-J$. Thus by localizing and using Lemma \ref{LocalProperty} we may assume without loss of generality that $S$ is bounded. By Theorem \ref{MainNewResult} it follows that
$S$ is locally the difference of  two c\`adl\`ag
submartingales. By Lemma \ref{LocClassD} and the Doob-Meyer
decomposition theorem  $S$ is locally a local semimartingale, and
thus applying twice Lemma \ref{LocalProperty}  we obtain that $S$ is a semimartingale.
\end{proof}

\section{Ramifications of the Bichteler-Dellacherie theorem}
In this section we prove that Riemann integrators are good
integrators, and somewhat strengthen Theorem BD.

It is well known that, in the definition of good integrators, the
space $\mathcal S$ can be replaced by the subset of
\emph{elementary integrands}, which consists of all processes $H$ of
the form
\begin{align}\label{ElRep}\textstyle
  H=\sum_{i=1}^k H^i \I_{(t_i,t_{i+1}]} , \end{align} where 
$ t_i$ are \emph{deterministic} times such that 
$0\leq t_1< \ldots< t_{k+1}=1$, 
and each $H_i$ is bounded
$\F_{t_i}$-measurable. In Lemma \ref{EleSuff} we prove this fact
in a slightly stronger form, which will be useful in proving Corollary \ref{NewChar}.

Let $\mathcal{E}_{D_n}$ be the space of 
all processes $H$ of
the form 
\begin{align}\label{ElRep2}
\textstyle
  H=\sum_{i=0}^{2^n -1} H^i \I_{ ( \frac{i}{2^n}, \frac{i+1}{2^n} ] } , \end{align}
where, for each $i=1,...,2^n-1$, $H^i$ is bounded and $\F_{\frac{i-1}{2^n}}$-measurable
(not only $\F_{\frac{i}{2^n}}$-measurable!), and  $H^0=0$; then, define  $\mathcal{E}_{D}:=
\bigcup_{n\geq 1} \mathcal{E}_{D_n} $. 

\begin{lemma}\label{EleSuff} 
  Let $S$ be an adapted process which is  right continuous in
  probability.
 Then $\mathcal{I}_S:
  \mathcal{S} \to L^0(\P)$ is a continuous operator if and only if its
  restriction to  $\mathcal{E}_{D}$ is continuous.
\end{lemma}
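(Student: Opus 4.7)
The ``only if'' direction is trivial, since $\mathcal{E}_D \subseteq \mathcal{S}$. For the converse, my plan is to show that every $H \in \mathcal{S}$ admits a sequence of approximants $H_n \in \mathcal{E}_{D_n}$ with $\|H_n\|_\infty \leq \|H\|_\infty$ and $\mathcal{I}_S(H) - \mathcal{I}_S(H_n) \to 0$ in probability. This is enough: given any null sequence $(G^m)_m$ in $\mathcal{S}$ a diagonal extraction then yields $\tilde G^m \in \mathcal{E}_D$ with $\|\tilde G^m\|_\infty \to 0$ and $\mathcal{I}_S(G^m) - \mathcal{I}_S(\tilde G^m) \to 0$ in probability, so continuity of $\mathcal{I}_S|_{\mathcal{E}_D}$ would transport to $\mathcal{S}$.

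By linearity it suffices to approximate a single term $H^1 \I_{(\tau_1,\tau_2]}$; using $\I_{(\tau_1,\tau_2]} = \I_{(\tau_1,1]} - \I_{(\tau_2,1]}$ together with $\F_{\tau_1} \subseteq \F_{\tau_2}$, I further reduce to the case $H = K \I_{(\tau,1]}$ with $K$ bounded and $\F_\tau$-measurable. The approximating stopping time I would use is
\begin{equation*}
\tau^n := \inf\{t \in D_n : t \geq \tau + 2^{-n}\}
\end{equation*}
(with $\inf \emptyset := \infty$ and $S_\infty := S_1$). The crucial identity
\begin{equation*}
\{\tau^n \leq k/2^n\} = \{\tau \leq (k-1)/2^n\} \in \F_{(k-1)/2^n}
\end{equation*}
both makes $\tau^n$ a stopping time and shows that $K \I_{\{\tau^n \leq k/2^n\}}$ is $\F_{(k-1)/2^n}$-measurable: on this set $\tau \leq (k-1)/2^n$ so $K$ is $\F_{(k-1)/2^n}$-measurable, while off it the factor vanishes. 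Therefore
\begin{equation*}
H_n := K \I_{(\tau^n, 1]} = \sum_{k=1}^{2^n - 1} \bigl(K \I_{\{\tau^n \leq k/2^n\}}\bigr) \I_{(k/2^n, (k+1)/2^n]}
\end{equation*}
lies in $\mathcal{E}_{D_n}$, with $\|H_n\|_\infty \leq \|K\|_\infty$. The shift ``$+2^{-n}$'' is the structural heart of the argument: it is precisely what aligns the look-back-by-one-step in the definition of $\mathcal{E}_{D_n}$ with the $\F_\tau$-measurability of $K$, so that no measurability is lost by cutting the support of the integrand at dyadic times.

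The remaining task is to show
\begin{equation*}
\mathcal{I}_S(H) - \mathcal{I}_S(H_n) = K(S_{\tau^n} - S_\tau) \longrightarrow 0 \text{ in probability,}
\end{equation*}
and this is where right continuity in probability of $S$ enters. This is the step I expect to be the main technical obstacle, since $\tau$ is genuinely random and the hypothesis on $S$ is a statement at deterministic times. The standard workaround is to first approximate $\tau$ from above by stopping times taking finitely many dyadic values (on each of whose level sets $\tau^n$ reduces to a deterministic sequence decreasing to a deterministic level), apply right continuity of $S$ in probability at each of the finitely many levels, and combine via an Egoroff/diagonal-type argument to pass back to the original $\tau$. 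Boundedness of $K$ then converts $S_{\tau^n} - S_\tau \to 0$ in probability into the desired convergence of integrals, completing the plan.
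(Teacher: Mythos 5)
Your construction coincides with the paper's: the proof in the text replaces $H=\sum_i H^i\I_{(\tau_i,\tau_{i+1}]}$ by $K^n=\sum_i H^i\I_{(\sigma_i^n,\sigma_{i+1}^n]}$ with $\sigma_i^n:=1\wedge(i+2)/2^n$ on $\{i/2^n<\tau_i\leq (i+1)/2^n\}$, and your $\tau^n=\inf\{t\in D_n: t\geq \tau+2^{-n}\}$ is exactly that stopping time; the measurability point (the one-step look-back built into $\mathcal{E}_{D_n}$ is absorbed by the extra shift of $2^{-n}$) is the same, and your packaging (reduction to $K\I_{(\tau,1]}$ plus a diagonal extraction) versus the paper's direct transfer of the boundedness constant $C$ from $\mathcal{E}_D$ to $\mathcal{S}$ is only a cosmetic difference.

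The one step you flag as the main obstacle is indeed the only delicate point, but your proposed workaround does not close it. Approximating $\tau$ from above by finitely-valued stopping times and invoking right continuity in probability at the deterministic dyadic levels gives information about increments starting from those levels; to ``pass back to the original $\tau$'' you would need $S_{\tau_m}\to S_\tau$ in probability, which is exactly the convergence-at-a-random-time statement you are trying to prove, so the scheme is circular. Worse, under the literal hypothesis the claim $S_{\tau^n}\to S_\tau$ can fail outright: take $U$ uniform on $[0,1]$, $S_t=\I_{\{t>U\}}$ and $\tau=U$; then $S$ is adapted and continuous in probability, yet $S_{\tau^n}=1$ on $\{\tau^n<\infty\}$ while $S_\tau=0$ a.s. The paper itself dispatches this step with the parenthetical ``since $S$ is right continuous'', which is immediate if right continuity is read pathwise --- and $S$ is c\`adl\`ag in every application of the lemma, in which case $\tau^n\downarrow\tau$ gives $S_{\tau^n}\to S_\tau$ almost surely and hence in probability, making the step far easier than you anticipated --- but is not justified under the bare ``right continuous in probability'' hypothesis. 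In short: your construction is the paper's construction and is correct; the final convergence should be obtained from pathwise right continuity rather than from the Egoroff scheme you sketch, which does not work.
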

\begin{proof}
  We have to show that if $\mathcal{I}_S$ is a bounded operator on
  $\mathcal E_D$, then it is also a bounded operator on $\mathcal S$.
  Given $\eps >0$, pick $C>0$ such that $\P(|\mathcal{I}_S(K)| > C) < \eps$
  for every process $K\in \mathcal E_D$ satisfying $\|K\|_\infty\leq
  1$. Let $H$ be a simple integrand as in (\ref{H}) and satisfying $\|H\|_\infty\leq
  1$, and define
the stopping times   $$\sigma_i^n:=1\wedge (i+2)/2^n \text{ on } \{i/2^n
<\tau_i \leq (i+1)/2^n \}.$$
 Then the process
$\textstyle K^n:=\sum_{i=1}^k H^i \I_{(\sigma_i^n,\sigma_{i+1}^n]}  $
is actually in $ \mathcal{E}_{D_n} $: this follows from the fact that
the stopping times $(\sigma_i^n)_i$ have values in $D_n$ and satisfy
$\tau_i +1/2^n \leq \sigma_i^n$, while $H^i$ is $\F_{\tau_i}$-measurable. 
Moreover $\mathcal{I}_S (K^n)$ converges to $ \mathcal{I}_S (H)$ in
probability (since $S$ is right continuous) and so, taking $n$ big
enough,  it follows that
$$\P(|\mathcal{I}_S(H)| > C)\leq \P(|\mathcal{I}_S(H)-\mathcal{I}_S
(K^n)| > C)+\P(|\mathcal{I}_S (K^n)| > C)< 2\eps .$$
Since $C$ was chosen independent of $H \in \mathcal S$, this
proves that  $\mathcal{I}_S$ is bounded on   $\mathcal S$.
\end{proof}

The previous lemma could be  reformulated as follows:
a cadlag adapted process $S$ is a good integrator iff 
$\mathcal{I}_S(H^n) \to 0$ in probability 
whenever  $\|H^n\|_\infty \to 0$ and $H^n \in \mathcal{E}_{D_n}$ for all $n$.

As a corollary of Theorem BD, we obtain that  semimartingales can be characterized by Riemann-sums.
Indeed, if $S$ is a semimartingale, the stochastic dominated convergence theorem implies that, for every left-continuous (resp. cadlag) adapted process $H$, the random variables
$$ \sum_{\tau_i\in \pi_n} H_{\tau_i} (S_{\tau_{i+1}}-S_{\tau_{i}})$$
converge in probability (to $\mathcal{I}_S(H)$, resp.  $\mathcal{I}_S(H_{-})$) as $n \to \infty$, for
any sequence  $(\pi_n)_n$ of random partitions whose mesh is going
to $0$. 
Conversely, we find that  this property characterizes semimartingales. Indeed, define a c\`adl\`ag adapted process $(S_t)_{0\leq t\leq 1}$ to be a  \emph{Riemann integrator} if for every bounded adapted continuous process\footnote{As mentioned in the introduction,  there are continuous processes which are not semimartingales for which \eqref{riemsum2} holds for all  integrands $H$ of the type $H_t=f(t,S_t)$, where $f$ is a bounded continuous function. On the other hand, every continuous deterministic $S$ for which \eqref{riemsum2} holds for all  integrands $H$ of the type $H_t=f(S_t)$ (with $f$  bounded continuous) is a function of finite variation (see \cite[Prop. 4.1]{Str81}).}  $H$ the sequence of random variables
\begin{align}\label{riemsum2} \textstyle
 \sum_{i=0}^{2^n-1} H_{\frac i{2^n}} (S_{\frac{i+1}{2^n}}-S_{\frac i{2^n}}) 
 \end{align}
converges\footnote{In fact, to obtain Corollary \ref{NewChar} it would be sufficient to require that the sequence in \eqref{riemsum2} is bounded in $L^0$ (by the same proof).} in probability as $n \to \infty$. Then, the following holds:

\begin{corollary}\label{NewChar}
Every Riemann integrator is a semimartingale.
\end{corollary}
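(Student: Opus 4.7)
The plan is to reduce the claim to the Bichteler--Dellacherie Theorem via Lemma \ref{EleSuff}: it suffices to show that if $(H^k)_k$ is a sequence with $H^k\in \mathcal{E}_{D_{n_k}}$ and $\|H^k\|_\infty\to 0$, then $\mathcal{I}_S(H^k)\to 0$ in probability. This will identify $S$ as a good integrator, and Theorem BD will then yield that $S$ is a semimartingale.

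The central construction exploits the strict delay built into the definition of $\mathcal{E}_{D_n}$. Given $H = \sum_{i=0}^{2^n-1} H^i \I_{(i/2^n,(i+1)/2^n]} \in \mathcal{E}_{D_n}$ (with $H^0=0$), define a piecewise linear path $\tilde H$ by $\tilde H_{i/2^n}:=H^i$ for $i=0,1,\ldots,2^n-1$, with linear interpolation between consecutive dyadic nodes and constant extension on $[(2^n-1)/2^n,1]$. Since $H^i$ is $\F_{(i-1)/2^n}$-measurable, on each interval $[i/2^n,(i+1)/2^n]$ the two interpolation values $H^i, H^{i+1}$ are both $\F_{i/2^n}$-measurable, so $\tilde H$ is a bounded, continuous, adapted process with $\|\tilde H\|_\infty\leq \|H\|_\infty$, and by construction
\begin{equation}\label{eq:plan-key}
\textstyle \sum_{i=0}^{2^n-1} \tilde H_{i/2^n}(S_{(i+1)/2^n}-S_{i/2^n}) \,=\, \mathcal{I}_S(H).
\end{equation}

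The remaining step is a Banach--Steinhaus argument. Let $\mathcal{C}$ be the Banach space of bounded, continuous, adapted processes equipped with the norm \eqref{supnorm}; completeness follows because a $\|\cdot\|_\infty$-limit of continuous adapted processes remains continuous and adapted. For each $m\geq 1$ the Riemann-sum map $R_m\colon \mathcal{C}\to L^0(\P)$ given by $R_m(G):=\sum_{i=0}^{2^m-1} G_{i/2^m}(S_{(i+1)/2^m}-S_{i/2^m})$ is linear and continuous, and the Riemann-integrator hypothesis ensures that $(R_m(G))_m$ converges in probability, hence is bounded in $L^0(\P)$, for every $G\in \mathcal{C}$. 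The uniform boundedness principle for continuous linear operators from a Banach space into the complete metrizable topological vector space $L^0(\P)$ then yields that $\{R_m\}_m$ is equicontinuous at $0$: for every $\eps>0$ there exists $\delta>0$ such that $\|G\|_\infty\leq \delta$ forces $\P(|R_m(G)|>\eps)<\eps$ for all $m$. Applying this with $G=\tilde H^k$ and invoking \eqref{eq:plan-key} gives $\mathcal{I}_S(H^k)\to 0$ in probability whenever $\|H^k\|_\infty\to 0$, as required.

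The delicate point is the equicontinuity step: one needs the form of Banach--Steinhaus appropriate for $L^0(\P)$-valued operators, and one must verify that $\mathcal{C}$ is complete. The continuous and adapted interpolant $\tilde H$ satisfying \eqref{eq:plan-key} is only available thanks to the strict delay $\F_{(i-1)/2^n}$ (rather than $\F_{i/2^n}$) in the definition of $\mathcal{E}_{D_n}$, which is precisely where Lemma \ref{EleSuff} pays off in an essential way.
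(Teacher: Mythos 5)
Your argument is correct and coincides with the paper's own proof: you build the same piecewise-affine continuous adapted interpolant of an $\mathcal{E}_{D_n}$-integrand (using the deliberate one-step delay in its definition), apply Banach--Steinhaus to the Riemann-sum operators exactly as in Lemma \ref{BanachSteinhausLemma}, and conclude via Lemma \ref{EleSuff} and Theorem BD. The only cosmetic difference is your constant (rather than affine-to-zero) extension on the last dyadic interval, which is immaterial since the value at $t=1$ does not enter the Riemann sum.
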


To prove Corollary \ref{NewChar}  we need some additional definitions. Consider the Banach
space $L^{\infty}(\Omega; C^0([0,1]))$ of all bounded continuous
processes $(H_t)_{t\in [0,1]}$, endowed with the sup norm
(\ref{supnorm}).
 Let $X$ be the subspace constituted by the processes which are
 adapted; this is
 a closed subspace, and hence a Banach space with the induced norm.
 Finally, define the linear continuous operator $\mathcal{I}_S^n:X \to L^0(\P)$ by
$$\textstyle \mathcal{I}_S^n(K):=\mathcal{I}_S(K^{D_n}), \text{ where }  K^{D_n}:=\sum_{i=0}^{2^n -1} K_{\frac{i} {2^n} }
 \I_{(\frac{i} {2^n} ,\frac{i+1} {2^n}    ]} .$$
  By definition, $S$ is a Riemann integrator if, for every $K\in X$, 
$\mathcal{I}_S^n(K)$ converges in probability as $n \to \infty$.
The Banach-Steinhaus theorem\footnote{Which is also commonly called ``the uniform
  boundedness principle''.}
 \cite[Theorem
2.6]{Ru91}
then yields the following:
\begin{lemma}
\label{BanachSteinhausLemma}
  If $S$ is a Riemann good integrator, then for every $\eps>0$ there is
  some $C>0$ such that $\P(\mathcal{I}_S^n(K)\geq C)\leq \eps$ for all
  $n\geq 1$ and all  continuous adapted processes $K$ such that  $\|K\|_\infty\leq 1$.
\end{lemma}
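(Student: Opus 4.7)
The statement is a direct application of the Banach--Steinhaus theorem, as foreshadowed by the explicit citation \cite[Theorem 2.6]{Ru91}. The plan is to apply it to the family $\{\mathcal{I}_S^n\}_{n\geq 1}$ of continuous linear operators from the Banach space $X$ into the F-space $L^0(\P)$, and then to unpack the resulting equicontinuity into the form appearing in the statement.

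First I would verify the hypotheses of the uniform boundedness principle. The domain $X$ is a Banach space (closed subspace of $L^\infty(\Omega; C^0([0,1]))$) and hence an F-space; the codomain $L^0(\P)$ is an F-space under the metrizable topology of convergence in probability; and each $\mathcal{I}_S^n$ is already noted in the excerpt to be linear and continuous. The pointwise boundedness hypothesis is the content of the Riemann integrator assumption: for each fixed $K\in X$ the sequence $(\mathcal{I}_S^n(K))_{n\geq 1}$ converges in probability, hence is bounded in $L^0(\P)$.

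Invoking \cite[Theorem 2.6]{Ru91}, the family $\{\mathcal{I}_S^n\}_{n\geq 1}$ is equicontinuous at $0$: for every $0$-neighborhood $V\subseteq L^0(\P)$ there is $\delta>0$ such that $\|K\|_\infty\leq \delta$ implies $\mathcal{I}_S^n(K)\in V$ for every $n$. Given $\eps>0$, the set $V_\eps:=\{Y\in L^0(\P): \P(|Y|\geq 1)\leq \eps\}$ is a $0$-neighborhood, so there is a corresponding $\delta=\delta(\eps)>0$ such that $\|K\|_\infty\leq \delta$ yields $\P(|\mathcal{I}_S^n(K)|\geq 1)\leq \eps$ uniformly in $n$. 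By linearity and homogeneity of $\mathcal{I}_S^n$, rescaling to $\|K\|_\infty\leq 1$ we obtain $\P(|\mathcal{I}_S^n(K)|\geq 1/\delta)\leq \eps$, so setting $C:=1/\delta$ gives the stated conclusion (in particular the one-sided version $\P(\mathcal{I}_S^n(K)\geq C)\leq \eps$).

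There is no genuine obstacle here; the proof is essentially bookkeeping around a standard functional-analytic principle. The only substantive point is to recognize that the qualitative hypothesis ``$(\mathcal{I}_S^n(K))_n$ converges in probability for every $K\in X$'' is exactly the pointwise boundedness needed to apply Banach--Steinhaus in the F-space setting, and that the resulting equicontinuity, expressed with respect to the base of $0$-neighborhoods $V_\eps$, is precisely the uniform probability bound claimed in the lemma.
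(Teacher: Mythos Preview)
Your proposal is correct and matches the paper's approach exactly: the paper simply states that the lemma follows from the Banach--Steinhaus theorem \cite[Theorem 2.6]{Ru91} applied to the operators $\mathcal{I}_S^n:X\to L^0(\P)$, and you have faithfully spelled out the hypotheses (Banach domain, F-space target, continuity of each $\mathcal{I}_S^n$, pointwise boundedness from convergence) and the translation of equicontinuity into the desired uniform probability bound.
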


It is now fairly straightforward to show that every Riemann integrator is a good
integrator.

\begin{proof}[Proof of Corollary \ref{NewChar}.]
Let $H\in \mathcal E_{D_n}$ be as in (\ref{ElRep2}) and satisfy  $\|H\|_\infty\leq
  1$.
Define a process $K$ by declaring it equal to $H^i$  at time $
t=i/2^n$, for $0\leq i\leq 2^n-1$, and equal to zero at time $1$, and extending it to $t\in [0,1]$ by
affine interpolation.
Then   $K$ is a continuous \emph{adapted} process such that  $\|K\|_\infty\leq
  1$ and $K^{D_n}=H$. Since $n$ was arbitrary and  $\mathcal{I}_S(H)=\mathcal{I}_S(K^{D_n})=\mathcal{I}_S^n(K)$ , $\mathcal{I}_S$ is  bounded on  $\mathcal E_D= \bigcup_{n\geq 1} \mathcal{E}_{D_n} $ by Lemma \ref{BanachSteinhausLemma}. Then
 Lemma \ref{EleSuff} shows that $S$ is a
  good integrator, and so Theorem BD implies that $S$ is a semimartingale.
\end{proof}

\end{document}